\documentclass[11pt]{amsart}

\usepackage{latexsym,mathrsfs,bm}
\addtolength{\textwidth}{3 truecm}
\addtolength{\textheight}{1 truecm}
\setlength{\voffset}{-0.6 truecm}
\setlength{\hoffset}{-1.3 truecm}

\usepackage[english]{babel}
\usepackage{paralist}
\usepackage{ulem} 
\usepackage{amsfonts,amssymb,amsmath}
\usepackage{amsthm}
\usepackage{graphicx}

\usepackage[latin1]{inputenc}
\usepackage{bm}
\usepackage{color}

\newcommand{\eps}{\varepsilon}
\newcommand{\R}{\mathbb{R}}
\newcommand{\Q}{\mathbb{Q}}
\newcommand{\C}{\mathbb{C}}

\newcommand{\N}{\mathbb{N}}

\newcommand{\Z}{\mathbb{Z}}

\newcommand{\cC}{\mathcal{C}}

\renewcommand{\mod}[1]{~\pr{\textnormal{mod}~#1}}

\newtheorem*{theo*}{Theorem}
\newtheorem{theo}{Theorem}

\newtheorem{ezer}{Exercise}

\newtheorem{prop}[ezer]{Proposition}

\newtheorem{lemma}{Lemma}
\newtheorem{corol}[lemma]{Corollary}

\theoremstyle{remark}
\newtheorem{remark}{Remark}

\newtheorem*{rem*}{Remark}

\newcommand{\pr}[1]{\left( #1\right)}

\newcommand{\pmd}[1]{\left| #1\right|}

\newcommand{\e}[1]{\operatorname{e}\pr{ #1}}

\newcommand{\df}{\mathrm{d}}

\newcommand{\comment}[1]{}

\subjclass[2010]{11L03 (primary); 11A55, 11M35 (secondary)}

\keywords{cotangent sum, continued fraction}

\newcommand{\syf}[1]{\left(\!\left(#1\right)\!\right)}
\newcommand{\floor}[1]{{\left\lfloor {#1} \right\rfloor}}
\newcommand{\abs}[1]{{\left| {#1} \right|}}

\let\originalleft\left
  \let\originalright\right
\renewcommand{\left}{\mathopen{}\mathclose\bgroup\originalleft}
  \renewcommand{\right}{\aftergroup\egroup\originalright}

\numberwithin{equation}{section}

\title{Partial sums of the cotangent function}

\author{S. Bettin}
\address{DIMA - Dipartimento di Matematica, Via Dodecaneso, 35, 16146 Genova - ITALY}
\email{bettin@dima.unige.it}

\author{S. Drappeau}
\address{SD: Aix Marseille Universit\'e, CNRS, Centrale Marseille, I2M UMR 7373, 13453 Marseille, France}
\email{sary-aurelien.drappeau@univ-amu.fr}

\begin{document}

\begin{abstract}
  Nous prouvons l'existence de formules de réciprocité pour des sommes de la forme~$\sum_{m=1}^{k-1} f(\frac{m}k) \cot(\pi\frac{mh}k)$, où~$f$ est une fonction~$C^1$ par morceaux, qui met en évidence un phénomène d'alternance qui n'apparaît pas dans le cas classique où~$f(x) = x$. Nous déduisons des majorations de ces sommes en termes du développement en fraction continue de~$h/k$.

  We prove the existence of reciprocity formulae for sums of the form $\sum_{m=1}^{k-1}f\pr{\frac{m}{k}}\cot\pr{\pi \frac{m h}k}$ where $f$ is a piecewise~$C^1$ function, featuring an alternating phenomenon not visible in the classical case where~$f(x)=x$. We deduce bounds for these sums in terms of the continued fraction expansion of~$h/k$.
\end{abstract}

\maketitle

\section{introduction}

There are several results in the literature proving reciprocity formulae for certain averages of the cotangent function. A prototypical example is the classical Dedekind sum which can be defined as
\begin{equation*}
  s\pr{\frac h k}:=-\frac1{4k}\sum_{m=1}^{k-1}\cot\pr{ \pi\frac{ m}k}\cot\pr{\pi \frac{m h}k},\qquad h, k\in\N,\ (h,k)=1,
\end{equation*}
and satisfies the well known reciprocity formula
\begin{equation*}
  s\pr{\frac h k}+s\pr{\frac{k}{h}}-\frac1{12hk}=\frac1{12}\pr{\frac hk+\frac kh-3}.
\end{equation*}
The Dedekind function has been generalized in several ways, all satisfying some sort of reciprocity (see e.g.~\cite{Zagier1973, Beck2003, Berndt1976}).

Another related example is given by the Vasyunin sum
\begin{equation*}
  V\pr{\frac h k}:=\sum_{m=1}^{k-1}\frac{m}k\cot\pr{\pi \frac{m\overline h}k},\qquad (h, k\in\N,\ (h,k)=1),
\end{equation*}
where, here and in the following, the overline indicates any multiplicative inverse modulo the denominator. The Vasyunin sum satisfies the reciprocity formula
\begin{equation}\label{eq:uvf}
  V\pr{ \frac hk}+V\pr{ \frac kh}=\frac{\log2\pi-\gamma}{\pi}\pr{k+h}+\frac{k-h}{\pi}\log\frac hk-\frac{\sqrt{hk}}{\pi^2}\int_{-\infty}^\infty\pmd{\zeta\pr{\tfrac12+it}}^2\pr{\frac hk}^{it}\frac{dt}{\frac14+t^2},
\end{equation}
where $\zeta(s)$ is the Riemann zeta-function, as well as another one relating $V\pr{ \overline h/k}$ with $V\pr{\overline k/h}$. For this, other generalizations, and the relation of $V$ with the B\'aez-Duarte and Nyman-Beurling criterion for the Riemann hypothesis we refer to~\cite{Vasyunin1995,Bettin2013a,BettinConrey2013,Baez-Duarte2003,Bagchi2006}.

Following the Euclid algorithm and repeatedly applying these results, one can then obtain bounds and asymptotic formulae for these sums in terms of the continued fraction expansion of $h/k$. See, for example~\cite{Hickerson1977} and~\cite{Bettin2015}.

All of the results mentioned above involve sums over all (non-zero) residues. Having in mind an application to the value distribution of Kashaev's knot invariants~\cite{BettinDrappeaua}, we are led to the problem of bounding \emph{partial} sums of the cotangent function. For example, let
\begin{equation*}
  C_\ell\pr{\frac h k}:=\frac1k\sum_{1\leq m\leq \ell}\cot\pr{\pi\frac{m  h}{k}},\qquad h\in\Z, k\in\N,\ (h,k)=1,\ 1\leq \ell<k.
\end{equation*}
Despite the simplicity of this definition, this partial average behave rather erratically, and there is little reason to suspect at first the existence of pure reciprocity formulae such as~\eqref{eq:uvf}.

\begin{figure}[h]
  \includegraphics[width=0.465\textwidth]{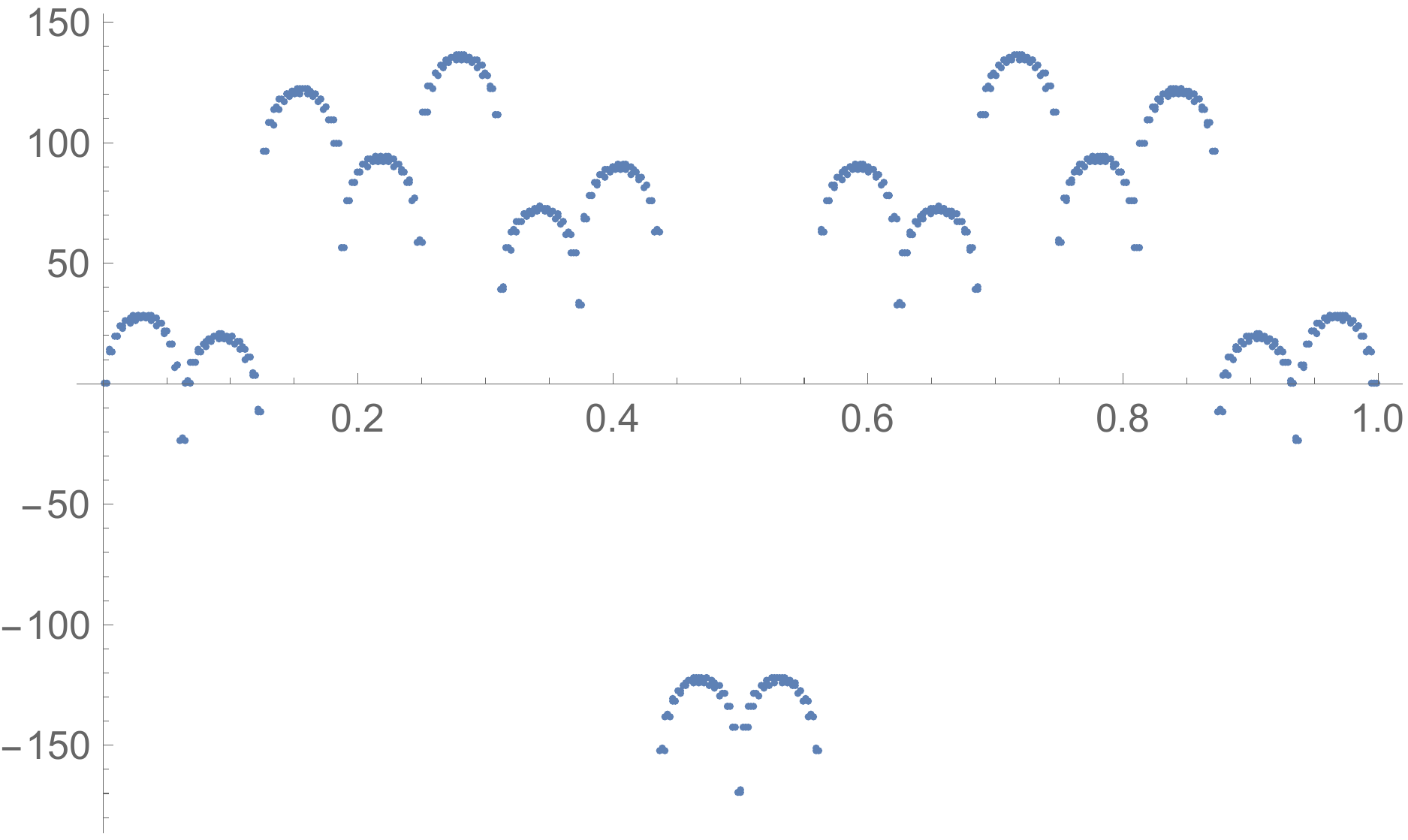}\hspace{0.05\textwidth}
  \includegraphics[width=0.465\textwidth]{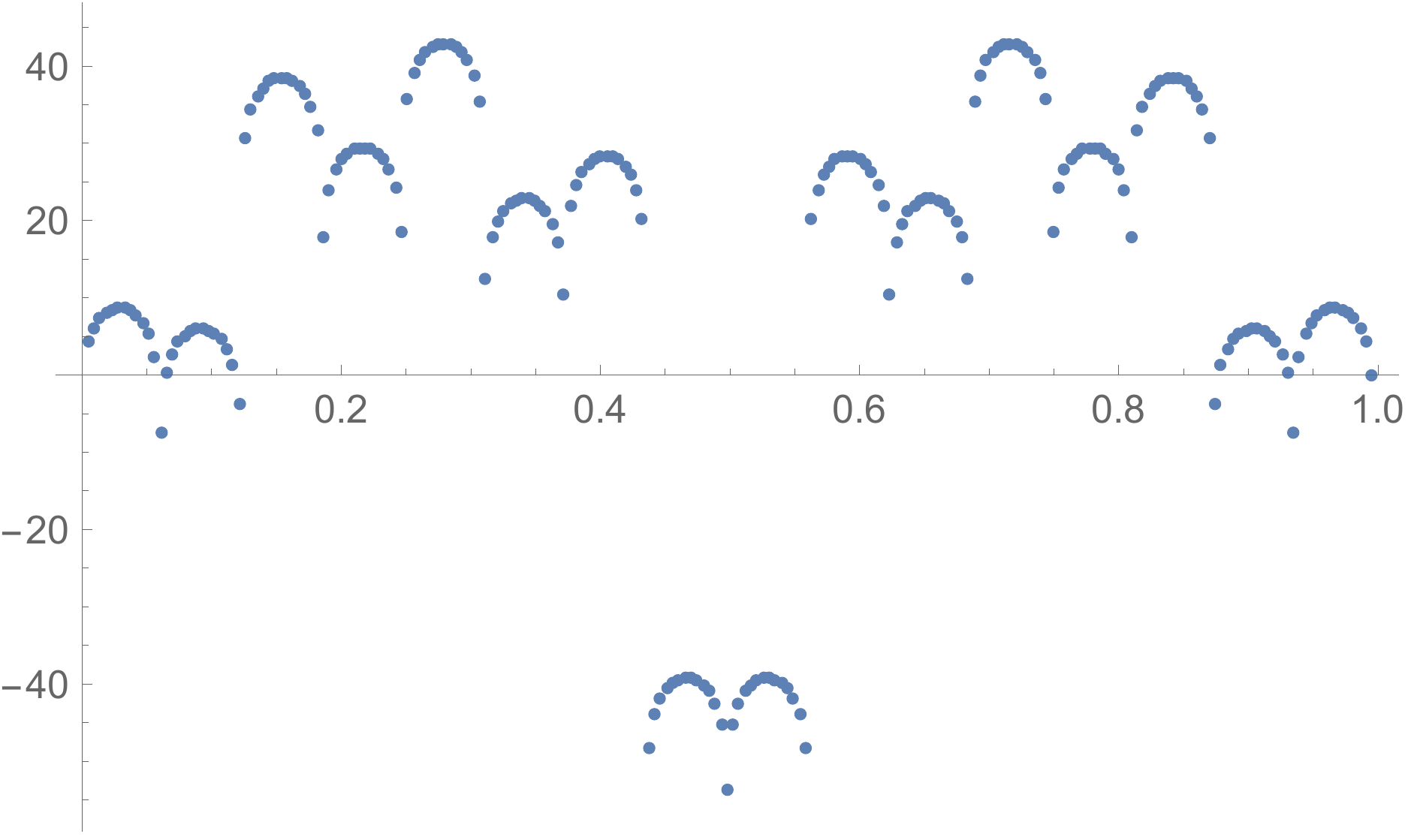}
  \caption{The graphs of $(\frac{\ell}{677},C_\ell(\frac{231}{677}))$ as $1\leq \ell<677$ and of $(\frac{\ell}{215},C_\ell(\frac{16}{215})$ as $1\leq \ell<215$.}
  \label{fig:1}
\end{figure}

Figure~\ref{fig:1} shows the evolution of $C_\ell({231}/{677})$ and $C_\ell({16}/{215})$ as $\ell$ varies. The continued fraction expansions of $\frac{231}{677}$ and $\frac{16}{215}$ are $\{0; 2, 1, 13, 2, 3, 2\}$ and $\{0;13, 2, 3, 2\}$ respectively, so that the similarity of the two graphs suggests that a reciprocity formula is in action. As we will see below, in this case, the reciprocity formula which we naturally obtain doesn't directly relate $C_\ell\pr{ h/ k}$ with $C_\ell\pr{{k}/{h}}$, nor $C_\ell\pr{{ h}/{k}}$ with $C_\ell\pr{{\overline k}/{h}}$, but rather $C_\ell\pr{{\overline h}/{k}}$ with $C_{\ell'}({\overline {h}}/k_1)$, where $k_1\equiv k_1\mod h$ and $0<k_1\leq h$, that is, the \emph{double} iteration of a standard reciprocity formula. This seems to be the first observed instance of such an alternating behaviour in these objects. We give this (non-exact) reciprocity formula in Corollary~\ref{cor:mcc} below. 

In general we will consider this question for 
\begin{equation*}
  S_f\pr{\frac h k}:=\frac1k\sum_{m=1}^{k-1}f\pr{\frac{m}{k}}\cot\pr{ \pi\frac{m h}k},\qquad h\in\Z, k\in\N,\ (h,k)=1.
\end{equation*}
where $f$ is a piecewise $\cC^1$ function. It will follow from our arguments that for generic functions $f$, $S_f\pr{\frac h k}$ doesn't satisfy a simple reciprocity formula unless $f$ is smooth with only one point of discontinuity other than, possibly, $0$. However, we are able to control its size in all cases, as needed for example in our application to the distribution of the Kashaev invariants.

\begin{theo}\label{th:mt}
  Let $f:\R\to\R$ be a $1$-periodic function which is piecewise~$\cC^1$, and continuous except possibly at $d$ points. Let $D_0=\max_{x\in\R}\lim_{\eps\to 0}|f(x+\eps)-f(x-\eps)|$ and $D_1:=\|f'\|_2$. Then
  \begin{align*}
    \abs{S_f\pr{\frac h k}}&\leq \frac{dD_0}{\pi k}\sum_{m=1}^rv_m\log ( \tfrac{v_m}{v_{m-1}})+O(dD_0+D_1),\\
    \abs{S_f\pr{\frac{\overline h}{k}}}&\leq \frac{dD_0}\pi\sum_{m=0}^{r-1}\frac{\log ( \frac{v_{m+1}}{v_{m}})}{v_m}+O(dD_0+D_1),
  \end{align*}
  where $v_0,\dots,v_m$ are the partial quotients of the continued fraction expansion of $h/k$.
\end{theo}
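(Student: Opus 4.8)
\emph{Strategy.} The plan is to iterate our reciprocity formulae along the Euclidean algorithm for~$h/k$, extracting a main term and an error at each step. It is convenient to first isolate the discontinuities: write $f=g+\sum_{i=1}^{d}c_i\,b(\cdot-x_i)$, where the $x_i$ are the points of discontinuity, $c_i=\lim_{\eps\to0}\bigl(f(x_i+\eps)-f(x_i-\eps)\bigr)$ (so $|c_i|\le D_0$), $b(x)=\{x\}-\tfrac12$ is the $1$-periodic sawtooth, and $g$ is $1$-periodic, continuous, and piecewise~$\cC^1$ with $\|g'\|_2\ll D_1+dD_0$. By linearity it suffices to bound $S_g(h/k)$ and each $S_{b(\cdot-x_i)}(h/k)$ separately, and likewise with $\overline h$ in place of~$h$.

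\emph{The continuous part.} After substituting $m\mapsto m\overline h$ and symmetrising $m\leftrightarrow k-m$, one has $S_g(h/k)=\tfrac1{2k}\sum_{m=1}^{k-1}\bigl(g(\tfrac{m\overline h}k)-g(\tfrac{-m\overline h}k)\bigr)\cot(\tfrac{\pi m}k)$. Expanding $g$ in its Fourier series (absolutely convergent, since $\sum_n n^2|\widehat g(n)|^2=\|g'\|_2^2/4\pi^2<\infty$) and using the elementary evaluation $\sum_{m=1}^{k-1}\cot(\tfrac{\pi m}k)\sin\bigl(2\pi\tfrac{n\overline h m}k\bigr)=k\bigl(1-2\{n\overline h/k\}\bigr)$, valid for $k\nmid n$, collapses everything to $S_g(h/k)=-2\sum_{n\ge1,\ k\nmid n}(\mathrm{Im}\,\widehat g(n))\bigl(1-2\{n\overline h/k\}\bigr)$, so that $|S_g(h/k)|\le2\sum_{n\ge1}|\widehat g(n)|\ll\|g'\|_2\ll D_1+dD_0$ by Cauchy--Schwarz. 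This accounts for the $O(dD_0+D_1)$ terms.

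\emph{The sawtooth parts.} Fix a shift~$x$; note that $S_b(h/k)=\tfrac1kV(\overline h/k)$ recovers (a normalisation of) the Vasyunin sum, and general~$x$ gives a Dedekind--Rademacher-type sum. Applying a reciprocity formula once expresses $S_{b(\cdot-x)}(h/k)$ as a \emph{main term} $M_0$ --- an explicit complete cotangent sum attached to the jump of $b(\cdot-x)$ at scale~$k$ --- plus (up to an accumulated scale factor) a sum $S_{b'(\cdot-x')}(h'/k')$, where $(h',k')$ comes from one step of the Euclidean algorithm and $b'(\cdot-x')$ is again a single sawtooth of jump~$1$, obtained from $b(\cdot-x)$ by an explicit affine change of variable, so that no new discontinuities are created (this is what keeps the final bounds linear in~$dD_0$) --- plus an error bounded as in the continuous case. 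The alternating phenomenon of the introduction shows up here as the fact that, following this identity twice, the denominator descends from~$k$ to~$k\bmod h$ while the numerator alternates between an \emph{$h$-type} and an \emph{$\overline h$-type} residue, so the iteration is naturally organised in double steps. Running it until the denominator reaches~$1$, where the sum is empty, gives $S_{b(\cdot-x)}(h/k)=\sum_m M_m+(\text{bounded})$. Finally, each $M_m$ is, up to the scale factor $\sigma_m$ accumulated so far, a partial cotangent sum over $\asymp a_m$ residues ($a_m$ the $m$-th partial quotient of~$h/k$); the expansion $\cot(\pi t)=\tfrac1{\pi t}+O(1)$ together with equidistribution of the residues involved yields $|M_m|\ll\tfrac1\pi\sigma_m\log(v_m/v_{m-1})$, with the logarithmic saving $\log(v_m/v_{m-1})=\log a_m+O(1)$ replacing the trivial~$a_m$. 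One checks that $\sigma_m=v_m/k$ when the continued fraction of $h/k$ is descended directly, and $\sigma_m=1/v_m$ when instead the reversed continued fraction (that of $\overline h\bmod k$ over~$k$) is used, the palindromicity of continuants letting one rewrite the latter in terms of the $v_m$ of~$h/k$. Summing over~$m$, multiplying by $|c_i|\le D_0$ and summing over the $\le d$ discontinuities produces the two displayed main terms $\tfrac{dD_0}{\pi k}\sum_m v_m\log(v_m/v_{m-1})$ and $\tfrac{dD_0}\pi\sum_m v_m^{-1}\log(v_{m+1}/v_m)$.

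\emph{Main obstacle.} The delicate point is to carry out the estimate of $M_m$ \emph{uniformly in the partial quotients and in the shift}~$x$: a large $a_m$ makes one reciprocity step span a long stretch of the algorithm, so extracting $\log a_m$ rather than $a_m$ requires the precise local behaviour of $\cot$ near its poles together with equidistribution, and the attendant error terms must be shown to accumulate to only $O(D_0+D_1)$ over all (up to $\asymp\log k$) steps. The rest --- checking that the affine changes of variable preserve the count of discontinuities and the jump sizes, that the smooth corrections generated along the way feed only into the $D_1$-type data, and that the accumulated scale factors are the continuant ratios of $h/k$ --- is bookkeeping.
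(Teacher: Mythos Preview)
Your decomposition~$f=g+\sum_i c_i\,b(\cdot-x_i)$ and the treatment of the continuous part~$g$ via Fourier expansion and Cauchy--Schwarz are correct and amount exactly to the paper's Lemma~\ref{lem:ml}: once the jumps are stripped off, what remains is~$O(\|g'\|_2)=O(D_1+dD_0)$, and the jump contribution is a sum of~$d$ shifted cotangent sums, each carrying weight~$\le D_0$. Likewise, the high-level plan for the sawtooth parts --- iterate a reciprocity along the Euclidean algorithm, collect a main term at each step, and reassemble into the continued-fraction sums --- is precisely the structure of Corollary~\ref{cor:mpc}.

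The gap is in the reciprocity step itself. You assert that one application ``expresses $S_{b(\cdot-x)}(h/k)$ as a main term~$M_0$ plus $S_{b'(\cdot-x')}(h'/k')$'' and that $|M_m|\ll\tfrac1\pi\sigma_m\log(v_m/v_{m-1})$ ``by the expansion $\cot(\pi t)=\tfrac1{\pi t}+O(1)$ together with equidistribution'', but neither the formula nor the bound is actually established, and this is where essentially all of the work lies. In the paper this step is Proposition~\ref{prop:mp}, proved by a contour-shifting argument with the periodic and Hurwitz zeta-functions: one writes an integral $I(h,k)=\frac1{2\pi i}\int_{(1/2)}F^+(s,\ell/k)\zeta(1-s)(h^sk^{1-s})^{-1}\frac{ds}{s(1-s)}$ and evaluates it two ways, the residues producing the~$\log(k/h)$ main term and a uniform convexity bound for $F^+$ (Lemma~\ref{lem:cb}) controlling the error. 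Your proposed elementary route via local cotangent behaviour and ``equidistribution of the residues involved'' is not obviously adequate: the residues $mh\bmod k$ over a block of length~$a_m$ need not be well-distributed in any useful sense when~$a_m$ is small, and when~$a_m$ is large the uniformity in the shift~$x$ is exactly the delicate point you flag but do not address. Without a concrete substitute for Proposition~\ref{prop:mp}, the argument is an outline rather than a proof.
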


\begin{remark}
  Following the approach of~\cite{Bettin2015}, it would be possible to compute the distribution of $S_f\pr{\frac h k}$ as $1\leq h<k$ and $k\to\infty$ (the method of~\cite{Maier2016} might also be used with some effort). In particular, one can prove that there is a function $\eps:\R_{>0}\to\R_{>0}$ with~$\lim_{A\to\infty}\eps(A)=0$, so that for each~$A>0$, the number of~$h\in[1, k]$ with~$(h, k)=1$ and~$\abs{S_f\pr{\frac h k}}\leq A$ is at least~$k(1-\eps(A)) + o(k)$ as~$k\to\infty$.
\end{remark}

\begin{remark}
  If $\{x\}$ denotes the fractional part of $x$, then $\pi\cot(\pi x)-\{x\}^{-1}-\{-x\}^{-1}$ extends to an odd bounded function on~$\R$. In particular, Theorem~\ref{th:mt} holds also if one replaces $\cot(\pi y)$ by $ (\{y\}^{-1}-\{y\}^{-1})/\pi$ in the definition of $S_f\pr{\frac h k}$ (or any other $1$-periodic function of~$y$ having a similar asymptotic behaviour around~$0$).
\end{remark}

Theorem~\ref{th:mt} is a crucial ingredient in the following law of large numbers for values of the Kashaev invariants of the figure eight knot, which we prove in~\cite{BettinDrappeaua}~:
\begin{theo}
  For~$x\in\Q$, let
  $$ J(x) := \sum_{n=0}^\infty \prod_{r=1}^n \abs{1-{\rm e}^{2\pi i rx}}^2 $$
  be the Kashaev invariant of the~$4_1$ knot. For some constant~$\mu>0$, as~$Q\to +\infty$, we have
  $$ \log J(x) \sim \mu (\log Q) \log\log Q $$
  for a proportion~$1-o(1)$ of fractions~$x\in(0, 1]\cap\Q$ of reduced denominator at most~$Q$.
\end{theo}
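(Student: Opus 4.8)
The plan is to reduce the statement to a weak law of large numbers for the maximum of a Birkhoff sum of $\log\abs{2\sin(\pi t)}$ along the orbit of the rotation by $x=h/k$, and then to analyse that maximum through the continued fraction expansion, using Theorem~\ref{th:mt} to control the non-singular contributions and all error terms. First I would record the elementary identity $\abs{1-e^{2\pi i r x}}^2=4\sin^2(\pi r x)$, so that, writing $x=h/k$ with $(h,k)=1$, every factor with $k\mid r$ vanishes and the series terminates:
$$ J\pr{\tfrac hk}=\sum_{n=0}^{k-1}\exp\pr{2L_n},\qquad L_n:=\sum_{r=1}^{n}\log\abs{2\sin\pr{\pi\tfrac{rh}{k}}}. $$
Since this is a sum of at most $k$ nonnegative terms, $\log J(h/k)=2\max_{0\le n<k}L_n+O(\log k)$, and as $\log k=o((\log Q)\log\log Q)$ this error is negligible. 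It therefore suffices to prove that $M(h/k):=\max_{0\le n<k}L_n$ satisfies $M(h/k)\sim\tfrac12\mu(\log Q)\log\log Q$ for a proportion $1-o(1)$ of the fractions.

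Next I would analyse $L_n$ via the expansion $h/k=[0;a_1,\dots,a_N]$ with convergent denominators $q_0,\dots,q_N=k$. The crucial structural fact is that $\int_0^1\log\abs{2\sin(\pi t)}\,dt=0$, so a Denjoy--Koksma-type estimate makes the ``bulk'' of each block of length $q_m$ cancel, and $L_n$ is driven entirely by the close returns of $\{rh/k\}$ to $0$, which are organised by the Ostrowski expansion of $n$ in the basis $(q_m)$. Partial summation rewrites the smooth part of these block contributions as weighted cotangent sums $S_f(h/k)$, to which Theorem~\ref{th:mt} applies. This yields a decomposition $M(h/k)=\tfrac12\sum_{m=1}^{N}\Psi_m+\mathrm{error}$, where $\Psi_m$ depends only on the local data $(a_m,a_{m+1},\dots)$ and grows linearly in $a$ for a large partial quotient $a$, while the error is controlled uniformly in $n$ by the two continued-fraction bounds of Theorem~\ref{th:mt}.

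Finally I would transfer the sum $\sum_m\Psi_m$ to the dynamics of the Gauss map. By Lévy's theorem $N\sim\tfrac{12\log2}{\pi^2}\log k$, and $\log k\sim\log Q$ for all but $o(k)$ of the fractions of denominator at most $Q$. The summands $\Psi_m$ are weakly dependent under the natural extension and have tail $\P(\Psi_m>t)\asymp 1/t$, inherited from the Gauss--Kuzmin law $\P(a_m>t)\asymp 1/(t\log2)$. For such a heavy tail the mean is (logarithmically) infinite, so one applies the truncated weak law of large numbers, giving that $(\sum_{m\le N}\Psi_m)/(N\log N)$ converges in probability to an explicit constant. Combined with $N\log N\asymp(\log Q)\log\log Q$ this gives $M(h/k)\sim\tfrac12\mu(\log Q)\log\log Q$ in measure, that is, for a proportion $1-o(1)$ of the fractions, which is exactly the assertion.

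The main obstacle is precisely this infinite-mean behaviour: because the dominant contributions come from the rare large partial quotients, there is no almost-sure asymptotic with this normalisation, and one must establish convergence only in probability, with a carefully chosen truncation level and a verification that no single continued-fraction step dominates. This demands quantitative mixing for the continued-fraction process and, above all, estimates for $L_n$ that are uniform in $n$ and explicit in the continued fraction expansion of $h/k$ — which is exactly what the bounds of Theorem~\ref{th:mt} supply.
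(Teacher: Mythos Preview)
The paper does not prove this theorem. It is stated purely as an application, with the proof explicitly deferred to the companion paper~\cite{BettinDrappeaua} (see the sentence immediately preceding the statement: ``which we prove in~\cite{BettinDrappeaua}''). There is therefore no argument in the present paper against which to compare your proposal.

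Your outline is, however, consistent with the role the paper assigns to itself: its sole declared contribution toward the Kashaev result is Theorem~\ref{th:mt}, and you correctly single out those continued-fraction bounds on~$S_f$ as the analytic input needed to control the partial sums~$L_n$ uniformly in~$n$. The remaining machinery you invoke --- the reduction of~$\log J$ to~$2\max_n L_n$, the Ostrowski block decomposition, L\'evy's theorem for the length of the expansion, and a truncated weak law of large numbers for a heavy-tailed Birkhoff sum over the Gauss map --- is entirely absent from this paper and would have to be supplied in full elsewhere. As written, your proposal is a plausible plan rather than a proof, but nothing in the present paper either confirms or contradicts its details.
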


\section{A reciprocity formula}

\subsection{Lemmas}

We introduce some notation and give some basic results on the Hurwitz and periodic zeta-functions (cf.~\cite[Chapter~12.9]{Apostol1976}). For $x\in\R$ and $\Re(s)>1$, let
\begin{equation*}
  \zeta(s,x):=\sum_{n+x>0}\frac1{(n+x)^s},\qquad
  F(s,x):=\sum_{n\geq1}\frac{\e{nx}}{n^s},
\end{equation*}
where $\e{x}:=e^{2\pi i x}$. Notice that we have ``periodized'' the Hurwitz zeta-function. It is well known that $\zeta(s,x)$ and $F(s,x)$ extend as meromorphic functions on $\C$ and satisfy functional equations. The functional equations are nicely expressed in terms of $\zeta^{\pm}(s,x):=\zeta(s,x)\pm\zeta(s,-x)$ and $F^{\pm}(s,x)=F(s,x)\pm F(s,-x)$. Indeed, they become 
\begin{align*}
  F^+(s,x)&=\chi(s)\zeta^+(1-s,x),\qquad
  F^-(s,x)=2i\frac{\Gamma(1-s)}{(2\pi)^{1-s}}\cos\pr{\frac{\pi s}2}\zeta^-(1-s,x).
\end{align*}
where $\chi(s):=2\frac{\Gamma(1-s)}{(2\pi)^{1-s}}\sin(\frac{\pi s}2)$ is as in the functional equation of the Riemann zeta-function.

We also recall the special values 
\begin{equation}\label{eq:sv1}
  \begin{aligned}
    F^{+}(1,x)& =-\log((1-e^{-2\pi i x})(1-e^{2\pi i   x}))=-\log(4\sin(\pi x)^2),\quad x\notin\Z,\\
    F^-(1, x)&=2\pi i\sum_{n\in\N}\frac{\sin(2\pi nx)}{\pi n} =2\pi i\syf{ x}
  \end{aligned}
\end{equation}
where $\syf{x}:=0$ if $x\in\Z$ and $\syf{x}:=\frac12-\{x\}$ otherwise.

Also, we have the expansion at $s=1$
\begin{equation*}
  \zeta(s,x)=\frac1{s-1}-\psi(\{x\})+O(s-1),
\end{equation*}
where $\psi$ is the digamma function~\cite[§8.36]{GZ}, and so also
\begin{equation}\label{eq:sva1}
  F^+(1-s,x)=-1-\big(\gamma+\log 2\pi +\tfrac12\psi(\{x\})+\tfrac12\psi(\{-x\})\big)(1-s)+O((s-1)^2).
\end{equation}

For $h,\ell\in\Z$, $k\in\N$ we define
\begin{equation}\label{eq:def-v1}
  V_1\pr{\frac hk,\frac\ell k}:=2\sum_{m\geq1}\sum_{n\geq1}\frac{\sin(2\pi n m\frac{h}k)\cos(2\pi m\frac{\ell}k)}{\pi nm}=2\sum_{m\geq1}\frac{\cos(2\pi m\frac{\ell}k)}{ m}\syf{\frac{mh}k}
\end{equation}
which clearly converges since $\sum_{m=1}^k\cos(2\pi m\ell/ k)\syf{ mh/k}=0$ by parity.
Actually, the above computation together with the functional equations for $\zeta(s,x)$ can be used to show that one can smoothly truncate the sum over~$m, n$ at height  $\ll Xk^{1+\eps}$, for any $X\geq1$, at the cost of an error which is $O((Xk)^{-100})$, as is done \emph{e.g.} in~\cite[p.~11423]{Bettin2015} with an analogous series.

For $h,p\in\Z$, $k,q\in\N$, $(h,k)=1$ we also define
\begin{align*}
  V_2(\tfrac hk,\tfrac pq)&:=\sum_{m\in\Z\atop |m+\frac pq|\geq 1}\sum_{n\geq 1}\frac{\sin(2\pi \frac hk n|m+\frac pq|)}{\pi n|m+\frac pq|}=\sum_{m\in\Z\atop |m+\frac pq|\geq 1}\frac{\syf{ \frac hk|m+\frac pq|}}{ |m+\frac pq|}
\end{align*}
where the outer sum is computed by summing together the terms $m$ and $-m$. Again, one easily sees that the series converges and that one can smoothly truncate the sums at $m,n\ll X(qk)^{1+\eps}$ at a cost of an error which is $O((Xqk)^{-100})$. 
The manipulation of conditionally convergent sums and integrals in the following is justified by these considerations.

We remark (cf. e.g. Lemma~\ref{lem:ml} below with $f(x)=x$ for $0< x<1$) that $V_1(\tfrac hk,\tfrac \ell k)$ and $V_2(\tfrac hk,\tfrac pq)$ reduce to $-\frac{\pi}k V(\tfrac hk)$ if $k|\ell$ and $p=0$.

We will prove a reciprocity formula relating $V_1$ with $V_2$, generalizing~\eqref{eq:uvf}, following the same approach as in the proof of~\cite[Theorem~5]{Bettin2013a}. We shall need the following uniform convexity bound for $F^+(\frac12+it,x)$. 

\begin{lemma}\label{lem:cb}
  Let $x\not\in\Z$ and
  \begin{equation*}
    K^+(s,x):=F^+(s,x)-\chi(s)(\{x\}^{s-1}+\{-x\}^{s-1}).
  \end{equation*}
  Then $K^+(\frac12+it,x)\ll_\eps (1+ |t|)^{\frac14+\eps}$ uniformly in $x$.
\end{lemma}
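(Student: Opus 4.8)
The plan is to reduce the estimate to a standard convexity bound for the Hurwitz zeta function $\zeta_{\mathrm H}(s,a)=\sum_{n\ge 0}(n+a)^{-s}$. Since $F^+(s,x)$ is $1$-periodic in $x$ we may assume $0<x<1$. Inserting the functional equation $F^+(s,x)=\chi(s)\,\zeta^+(1-s,x)$ gives
\[ K^+(s,x)=\chi(s)\big(\zeta^+(1-s,x)-\{x\}^{s-1}-\{-x\}^{s-1}\big), \]
and since $\zeta(1-s,x)=\zeta_{\mathrm H}(1-s,x)$, $\zeta(1-s,-x)=\zeta_{\mathrm H}(1-s,1-x)$ for $0<x<1$, with $\{x\}^{s-1}=x^{s-1}$ and $\{-x\}^{s-1}=(1-x)^{s-1}$ being exactly the $n=0$ terms of those two Hurwitz series, the shift relation $\zeta_{\mathrm H}(w,b)=b^{-w}+\zeta_{\mathrm H}(w,b+1)$ yields
\[ K^+(s,x)=\chi(s)\big(\zeta_{\mathrm H}(1-s,1+x)+\zeta_{\mathrm H}(1-s,2-x)\big). \]
The shift parameters $1+x$ and $2-x$ now lie in $[1,2]$, and as $|\chi(\tfrac12+it)|=1$ it suffices to prove $\zeta_{\mathrm H}(\tfrac12+it,a)\ll_\eps(1+|t|)^{1/4+\eps}$ uniformly for $a$ in a fixed compact subinterval of $(0,\infty)$.

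I would obtain this from Phragm\'en--Lindel\"of in the strip $-\eps\le\Re s\le 1+\eps$ applied to $s\mapsto\zeta_{\mathrm H}(s,a)-\tfrac1{s-1}$, which is entire and of finite order there. On $\Re s=1+\eps$ the Dirichlet series gives $|\zeta_{\mathrm H}(s,a)|\le\zeta_{\mathrm H}(1+\eps,a)\le\zeta(1+\eps)\ll_\eps 1$, so this function is $\ll_\eps 1$. On $\Re s=-\eps$, writing $\zeta_{\mathrm H}(s,a)=\zeta_{\mathrm H}(s,a-1)-(a-1)^{-s}$ with $a-1\in(0,1)$ and $|(a-1)^{-s}|=(a-1)^{\eps}\le1$, and expressing $\zeta_{\mathrm H}(s,a-1)=\zeta(s,a-1)$ via the functional equations for $F^\pm$ while estimating the gamma factors by Stirling, one gets $|\zeta_{\mathrm H}(s,a)|\ll_\eps(1+|t|)^{1/2+\eps}$; on that line $\tfrac1{s-1}$ is bounded, so the function is $\ll_\eps(1+|t|)^{1/2+\eps}$. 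Convexity interpolates these to an exponent $\tfrac{(1/2+\eps)^2}{1+2\eps}$ on $\Re s=\tfrac12$, which is $<\tfrac14+\eps'$ once $\eps$ is small in terms of $\eps'$; since $\tfrac1{s-1}$ is again bounded there, $\zeta_{\mathrm H}(\tfrac12+it,a)\ll_{\eps'}(1+|t|)^{1/4+\eps'}$. Substituting into the displayed identity for $K^+$ finishes the proof.

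The only point demanding genuine care is the \emph{uniformity} in the shift parameter, i.e.\ that no intermediate estimate degenerates as $x\to 0^+$ or $x\to 1^-$ (equivalently, as the Hurwitz parameter approaches an integer). This works out because every bound above is taken either on $\Re s=1+\eps$, or on $\Re s=-\eps$ through the functional equation (which again reduces it to $\Re s=1+\eps$), where the relevant Dirichlet and polylogarithm series converge absolutely with a bound depending on $\eps$ alone, and where the auxiliary factor $(a-1)^{\pm\eps}$ stays bounded by $1$; the passage to $\Re s=\tfrac12$ is then pure Phragm\'en--Lindel\"of, which is insensitive to $a$. A slightly slicker variant is to apply the convexity principle directly to $K^+(s,x)+\tfrac{4}{s-1}$, the constant $4$ being the ($x$-independent) residue of $K^+$ at its unique pole $s=1$ inside the strip; but the reduction to $\zeta_{\mathrm H}$ keeps the analytic bookkeeping cleaner.
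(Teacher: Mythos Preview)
Your argument is correct and is essentially the paper's proof: both apply Phragm\'en--Lindel\"of on the strip~$-\eps\le\Re s\le 1+\eps$, using absolute convergence on one edge and the functional equation (plus absolute convergence again) on the other; you simply factor out~$\chi(s)$ first and phrase the convexity bound in terms of~$\zeta_{\mathrm H}(\cdot,a)$ with~$a\in(1,2)$, whereas the paper keeps~$K^+$ intact. One cosmetic slip: in your closing parenthetical the residue of~$K^+$ at~$s=1$ is~$+4$, so the entire modification is~$K^+(s,x)-\tfrac{4}{s-1}$, not~$+\tfrac{4}{s-1}$.
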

\begin{proof}
  For $\Re(s)=1+\eps$ we have $K^+(s,x)\ll_\eps 1$, whereas on $\Re(s)=-\eps$ after applying the functional equation and expanding the resulting series we have $K^+(s,x)\ll_\eps (1+|s|)^{\frac12+\eps}$ uniformly in $x$. Phragmen-Lindel\"of's theorem~\cite[§5.61]{Titchmarsh1939} then gives the claimed bound.
\end{proof}

\subsection{Main reciprocity formula}

\begin{prop}\label{prop:mp}
  Let $\ell,h,k\geq1$ and $(h,k)=1$. For $h\neq1$, let $\beta:=\{\frac kh\}^{-1}\{\frac \ell h \}$ if $k\nmid \ell$ and $\beta=0$ if $k\mid \ell$.  
  Then, we have
  \begin{equation}\label{eq:recipmain}
    \frac1hV_1\pr{\frac{h}k,\frac\ell k}+\delta_{h\neq1}\frac 1k V_2\pr{\Big\{\frac{k}h\Big\},\beta}=\Big(\frac{\gamma_{h,k,\ell}}h-\frac{1}{k}\Big)\log \pr{\frac{ k}{h}}+O\pr{\frac1k+\frac1h},
  \end{equation}
  where $\delta_{h\neq1}=0$ if $h=1$ and $\delta_{h\neq1}=1$ otherwise, and where $\gamma_{h,k,\ell}=1$ if $k\mid \ell$ and otherwise $\gamma_{h,k,\ell}=2$ if $k\leq h$ and  $0\leq\gamma_{h,k,\ell}\leq 1$ if $k> h$.
\end{prop}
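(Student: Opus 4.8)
The plan is to follow the contour-integration approach of \cite[Theorem~5]{Bettin2013a}, writing both $V_1$ and $V_2$ as integrals of products of Hurwitz-type zeta-functions and shifting contours. Concretely, I would start from the representation of $V_1\pr{\frac hk,\frac\ell k}$ obtained by combining \eqref{eq:def-v1} with the truncation remark: write $\syf{\frac{mh}k}$ via $F^-(1,\cdot)$ as in \eqref{eq:sv1}, then express the sum over $m$ using Mellin inversion so that
\est{
  \frac1h V_1\pr{\frac hk,\frac\ell k} = \frac1{2\pi i}\int_{(c)} G(s)\,\df s
}
for a suitable integrand $G(s)$ built out of $F^-(s,\frac hk)$-type factors and a Dirichlet series in $\cos(2\pi m\ell/k)$, with $c>1$. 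The key algebraic input is to recognize the Dirichlet polynomial $\sum_m \cos(2\pi m\ell/k)m^{-s}$ as (a combination of) $F^+(s,\ell/k)$, so that the integrand becomes $\chi$-factors times $F^\pm$ at arguments $h/k$ and $\ell/k$, to which the functional equations stated in the excerpt apply cleanly.

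Next I would move the line of integration from $\Re(s)=c>1$ leftwards, picking up the relevant poles. The pole of the zeta-type factors at $s=1$ produces the main term: using the expansions \eqref{eq:sva1} and the $\frac1{s-1}$ term, the residue at $s=1$ contributes exactly $(\frac{\gamma_{h,k,\ell}}h-\frac1k)\log(\frac kh)$ together with $O(\frac1h+\frac1k)$; the three cases in the definition of $\gamma_{h,k,\ell}$ (namely $k\mid\ell$, $k\le h$, and $k>h$) correspond to whether and how the digamma terms $\psi(\{\pm\ell/h\})$, $\psi(\{\pm k/h\})$ enter, and to the location of $\ell/h$ relative to its integer part — this is where $\beta=\{k/h\}^{-1}\{\ell/h\}$ is forced on us. A possible pole at $s=0$ (from $\chi(s)$ or from the boundary terms of the truncated sums) must be tracked and is absorbed into the error or into the $\delta_{h\ne1}$ term. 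After crossing $s=1$ and $s=0$ one applies the functional equations to the remaining integral on a line $\Re(s)=-\eps$; re-expanding the resulting Dirichlet series and carrying out the sum over $m$ term by term is precisely what manufactures the companion sum, and after unfolding one identifies it with $\frac1k V_2\pr{\{k/h\},\beta}$. When $h=1$ the factor $\{k/h\}^{-1}$ and the series degenerate, which is exactly why the $\delta_{h\ne1}$ switch appears.

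For the error term I would use Lemma~\ref{lem:cb}: on the critical line $\Re(s)=\frac12$ the relevant $K^+$-type factor is $\ll_\eps (1+|t|)^{\frac14+\eps}$ uniformly in $x$, and the remaining factor on that line has an honest $\frac1{k}$ or $\frac1{h}$ of savings once the $\{x\}^{s-1}$ main parts have been split off and summed explicitly (those explicit main parts are what become $V_2$, and what is genuinely left over is $O(\frac1k+\frac1h)$ after integrating $(1+|t|)^{\frac14+\eps}$ against the decaying $\Gamma$-factors). So the structure is: main term from $s=1$, companion sum from the reflected integral via functional equations, and a negligible remainder controlled by the convexity bound.

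The main obstacle I expect is the bookkeeping around $s=0$ and around the case distinctions for $\gamma_{h,k,\ell}$ and $\beta$: one must be careful that the digamma/log contributions from the residue, the boundary terms coming from the smooth truncation of the conditionally convergent double sums, and the change of variables $m\mapsto m+\beta$ needed to recognize $V_2\pr{\{k/h\},\beta}$ all fit together without double-counting, and in particular that the sign and the precise value $\gamma_{h,k,\ell}\in\{1\}\cup\{2\}\cup[0,1]$ come out correctly in each regime $k\mid\ell$, $k\le h$, $k>h$. The analytic estimates themselves are routine given Lemma~\ref{lem:cb}; the delicate part is purely the identification of the explicit pieces.
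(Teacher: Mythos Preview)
Your strategy is correct and close to the paper's, but the organization differs and one identification is off. The paper does not write $V_1$ as an integral and shift leftwards; instead it defines a single auxiliary integral
\[
I(h,k)=\frac1{2\pi i}\int_{(1/2)}\frac{F^+(s,\ell/k)\,\zeta(1-s)}{h^{s}k^{1-s}}\,\frac{ds}{s(1-s)}
\]
(with $\zeta(1-s)/h^s$, not an $F^-(s,h/k)$ factor) and evaluates it in two ways. In the first, one splits $F^+=K^++\chi(s)(\{\alpha\}^{s-1}+\{-\alpha\}^{s-1})$; the $K^+$-piece is $O((hk)^{-1/2})$ on $\Re(s)=\tfrac12$ by Lemma~\ref{lem:cb}, and each explicit piece is shifted to $+\infty$ or to $\Re(s)=-\tfrac14$ according as $\{\pm\alpha\}k/h<1$ or $\ge1$, producing $\log^-$ terms. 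In the second, one uses $\tfrac1{s(1-s)}=\tfrac1{1-s}+\tfrac1s$ to write $I=I_1+I_2$ and shifts each to $\Re(s)=5/4$: expanding the series, $I_1$ yields $\tfrac1hV_1$ (plus a residue $\asymp h^{-1}\log\{\pm\alpha\}$), while $I_2$ yields $\tfrac1k\sum_{m\in\Z}\syf{\tfrac kh|m+\alpha|}/|m+\alpha|$ (plus a digamma residue). The latter sum is turned into $\tfrac1kV_2(\{k/h\},\beta)$ by replacing $\alpha$ by $\beta$ in the denominator at cost $O(1/k)$, which is indeed the delicate bookkeeping step you anticipate. Equating the two evaluations gives~\eqref{eq:recipmain}.

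Where your sketch goes astray is the source of $\gamma_{h,k,\ell}$: it does \emph{not} come from a residue at $s=1$ nor from the digamma terms in~\eqref{eq:sva1}. It comes entirely from the first evaluation, via the elementary identity
\[
\log x+\log(1-x)-\log^-(xy)-\log^-((1-x)y)=-\gamma\log y+O(1)
\]
applied with $x=\{\alpha\}$, $y=k/h$; the dichotomy $y\le1$ versus $y>1$ is what forces $\gamma=2$ versus $0\le\gamma\le1$. The digamma contribution from the pole of $I_2$ at $s=1$ is instead $-\tfrac1{2k}(\{\alpha\}^{-1}+\{-\alpha\}^{-1})$, and this cancels against the two extreme terms $m\in\{-\floor{\alpha},-\floor{\alpha}-1\}$ isolated from the $V_2$ series, not against the logarithmic main term. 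So your plan would work once executed, but the attribution of the three regimes for $\gamma$ to ``how the digamma terms enter'' is not where the case split actually lives.
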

\begin{remark}
  For $k\nmid\ell$, the term ${\gamma_{h,k,\ell}\log\pr{\frac{k}{h}}}/h$ can be replaced in this estimate by 
  $$\tfrac1h \big({\log\{\tfrac\ell k\}+\log\{-\tfrac\ell k\}-\log^- (\{\tfrac\ell k\}\tfrac kh)-\log^- (\{-\tfrac\ell k\}\tfrac kh)}\big),$$
  where $\log^-(x):=\min(\log x,0)$.
\end{remark}
\begin{proof}
  We assume $k\nmid \ell$, since otherwise the formula is an immediate consequence of the usual Vasyunin's formula~\eqref{eq:uvf}.
  Let
  \begin{equation*}
    I(h,k):=\frac1{2\pi i}\int_{(\frac12)}\frac{F^+(s,\frac{\ell}k)\zeta(1-s)}{h^{s}k^{1-s}}\,\frac{ds}{s(1-s)},
  \end{equation*}
  where $\int_{(c)}\cdot\, ds:=\int_{c-i\infty}^{c+i\infty} \cdot\, ds$.
  We will now proceed to evaluate $I(h,k)$ in two different ways.
  
  Throughout, we denote
    $$ \alpha = \frac{\ell}k. $$
  In the notation of Lemma~\ref{lem:cb}, we have
  \begin{equation*}
    I(h,k)=\frac1{2\pi i}\int_{(\frac12)}\frac{K^+(s,\alpha)\zeta(1-s)}{h^{s}k^{1-s}}\,\frac{ds}{s(1-s)}+\frac1{2\pi i}\int_{(\frac12)}\frac{\chi(s)(\{\alpha\}^{s-1}+\{-\alpha\}^{s-1})\zeta(1-s)}{h^{s}k^{1-s}}\,\frac{ds}{s(1-s)}.
  \end{equation*}
  By Lemma~\ref{lem:cb}, the first integral is bounded by 
  \begin{equation*}
    \ll \frac1{\sqrt {hk}}\int_{\R}\frac{|\zeta(\frac12+it )|\,dt}{(1+|t|)^{\frac32}}\ll \frac1{\sqrt {hk}},
  \end{equation*}
  whereas the contribution of $\{\alpha\}$ to the second integral is  
  \begin{equation*}
    \frac1{2\pi i}\int_{(\frac12)}\frac{\chi(s)\{\alpha\}^{s-1}\zeta(1-s)}{h^{s}k^{1-s}}\,\frac{ds}{s(1-s)}=\frac1{k\{\alpha\}}\frac1{2\pi i}\int_{(\frac12)}\zeta(s)\pr{\{\alpha\}\frac k h}^s\,\frac{ds}{s(1-s)}.
  \end{equation*}
  If $\{\alpha\}\frac k h< 1$ we move the line of integration to $\Re(s)=+\infty$ obtaining a contribution from the double pole at $s=1$
  of 
  \begin{equation*}
    \frac{\log (\{\alpha\}\frac k h)+\gamma-1}{h}.
  \end{equation*}
  If $\{\alpha\}\frac k h\geq1$ then we move the integral to $\Re(s)=-\frac14$ passing through a simple pole at $s=0$.  The contribution of the residue is $-\frac1{2k\{ \alpha\}}=O(1/h)$, whereas  the integral on the new line contributes $O(\frac1{k\{\alpha\}}\pr{\{\alpha\}\frac k h}^{-1/4})=O(1/h)$ since $\zeta(-1/4+it)\ll (1+|t|)^\frac 34$.
  Thus, in both cases we find that the contribution of $\{\alpha\}$ to the second integral is
  \begin{equation*}
    \frac{\log^- (\{\alpha\}k/h)+O(1)}h.
  \end{equation*}
  Repeating the same computation for $\{-\alpha\}$ we then obtain our first expression for $I(h,k)$:
  \begin{equation}\label{eq:firstf}
    I(h,k)=\frac{\log^- (\{\alpha\}k/h)+\log^- (\{-\alpha\}k/h)+O(1)}h+O\pr{\frac1{\sqrt{kh}}}.
  \end{equation}

  Now we compute $I(h,k)$ in a second way.
  We split the integral into
  \begin{align*}
    &\frac1{2\pi i}\int_{(\frac12)}\frac{F^+(s,\alpha)\zeta(1-s)}{h^{s}k^{1-s}}\,\frac{ds}{s(1-s)}\\
    &\qquad= \frac1{2\pi i}\int_{(\frac12)}\frac{F^+(s,\alpha)\zeta(1-s)}{h^{s}k^{1-s}}\,\frac{ds}{1-s} +\frac1{2\pi i}\int_{(\frac12)}\frac{F^+(1-s,\alpha)\zeta(s)}{h^{1-s}k^{s}}\,\frac{ds}{1-s} =I_1+I_2,
  \end{align*}
  say, where in the second integral we made the change of variables $s\to 1-s$. We now consider $I_1$. We move the line of integration to $\Re(s)=5/4$ passing through a simple pole at $s=1$. We obtain
  \begin{align*}
    I_1&=-\frac{F^{+}(1,\alpha)}{2h}+\frac1{2\pi i}\int_{(5/4)}\frac{F^+(s,\alpha)\zeta(1-s)}{h^{s}k^{1-s}}\,\frac{ds}{1-s}.
  \end{align*}
  By~\eqref{eq:sv1},
  \begin{align*}
    F^{+}(1,\alpha)&=-\log(4\sin(\pi \alpha)^2)=-2\log\{ \alpha\}-2\log\{- \alpha\}+O(1).
  \end{align*}
  Also,
  \begin{equation*}
    \frac1{2\pi i}\int_{(5/4)}\frac{F^+(s,\alpha)\zeta(1-s)}{h^{s}k^{1-s}}\,\frac{ds}{1-s}=
    \sum_{m\in\Z_{\neq0}}\sum_{n\geq1}\frac{\e{m\alpha}}k\frac1{2\pi i}\int_{(5/4)}\chi(1-s)(hn|m|/k)^{-s}\,\frac{ds}{1-s}.
  \end{equation*}
  By the Mellin formula 
  \begin{equation*}
    \frac1{2\pi i}\int_{(5/4)}\chi(1-s)u^{-s}\,\frac{ds}{1-s}=\frac{\sin(2\pi u)}{\pi u}
  \end{equation*}
  we find
  \begin{align*}
    &\frac1{2\pi i}\int_{(2)}\frac{F^+(s,\alpha)\zeta(1-s)}{h^{s}k^{1-s}}\,\frac{ds}{1-s}=
    \sum_{n\geq1,m\in\Z_{\neq0}}\frac{\sin(2\pi hn|m|/k)\e{m\alpha}}{\pi hn|m|}=\frac 1h V_1\pr{\frac hk,\alpha}.
  \end{align*}
  Thus,
  \begin{align*}
    I_1&=\frac{\log\{\alpha\}+\log\{-\alpha\} + O(1)}{h}+\frac1h V_1\pr{\frac hk,\alpha}.
  \end{align*}

  As for $I_2$ we move the line integration to $\Re(s)=5/4$ passing through a double pole at $s=1$.  By~\eqref{eq:sva1} the pole contributes a residue
  \begin{equation}\label{eq:rs2}
    \frac{\psi(\{\alpha\})+\psi(\{-\alpha\})  +2\log (2\pi k/h)}{2k}=\frac{-\{\alpha\}^{-1}-\{-\alpha\}^{-1}  +2\log (k/h)+O(1)}{2k}
  \end{equation}
  since $\psi(x) = \frac1x + O(1)$ for~$0<x<1$.
  The contribution of the integral to $I_2$ is
  \begin{align*}
    \frac1{2\pi i}\int_{(5/4)}\frac{F^+(1-s,\alpha)\zeta(s)}{h^{1-s}k^{s}}\,\frac{ds}{1-s}&=
    \sum_{m\in\Z}\sum_{n\geq 1}\frac1{2\pi i h}\int_{(5/4)}\chi(1-s)\pr{\frac{kn}h|m+\alpha|}^{-s}\,\frac{ds}{1-s}\\
    &=\sum_{m\in\Z}\sum_{n\geq 1}\frac{\sin(2\pi \frac khn|m+\alpha|)}{\pi kn|m+\alpha|} \\
    &=\frac1k \sum_{m\in\Z} \frac{\syf{\frac kh \abs{m+\alpha}}}{\abs{m+\alpha}},
  \end{align*}
where in the first line we have applied the functional equation to $F^+$ and expanded the Dirichlet series.

  Now, we observe that the series sums to zero if $h=1$ (since $k\abs{m+\alpha}\in\Z$ in this case) and the claimed result easily follows. Thus, assume $h\neq1$.
  We isolate the terms $m\in\{-\floor{\alpha}, -\floor{\alpha}-1,\floor{-\beta},\floor{-\beta}+1\}$ from the sum, where
  $$ \beta:=\Big\{\frac kh\Big\}^{-1}\Big\{\frac \ell h \Big\}. $$
  The terms~$m\in\{-\floor{\alpha}, -\floor{\alpha}-1\}$ contribute
  \begin{equation*}
    \frac{\syf{\tfrac kh\{\alpha\}}}{k\{\alpha\}}+\frac{\syf{\tfrac kh\{-\alpha\}}}{k\{-\alpha\}}
  \end{equation*}
  whereas the contribution of $m'\in\{\floor{-\beta},\floor{-\beta}+1\}$ (with  $m'\neq-\floor{\alpha},-\floor{\alpha}-1$) is bounded by
  \begin{equation*}
    \frac1k\frac{\syf{\tfrac kh|m'+\alpha|}}{|m'+\alpha|}\ll \frac1k.
  \end{equation*}
  Next, we replace $\alpha$ in the denominator by $\beta$. The error in doing  so is 
  \begin{align*}
    &\frac1{ k}\sum_{m\neq -\floor{\alpha},-\floor{\alpha}-1,\floor{-\beta},\floor{-\beta}+1}\syf{\tfrac kh|m+\alpha|}\pr{\frac1{|m+\{\alpha\}|}-\frac1{|m+\beta|}}\ll\frac1k.
  \end{align*}
  We can then include again the terms $-\floor{\alpha}$ and~$-\floor{\alpha}-1$ (when they are different from $\floor{-\beta},\floor{-\beta}+1$) at the cost of an error which is analogously seen to be $O(1/k)$, obtaining
  \begin{equation*}
    \frac1k \sum_{m\in\Z}\frac{\syf{\frac kh \abs{m+\alpha}}}{|m+\alpha|}=\frac1k \sum_{m\in\Z,\atop m\neq\floor{-\beta},\floor{-\beta}+1} \frac{\syf{\frac kh \abs{m+\alpha}}}{|m+\beta|}+\frac{\syf{\tfrac kh\{\alpha\}}}{k\{\alpha\}}+\frac{\syf{\tfrac kh\{-\alpha\}}}{k\{-\alpha\}}+O\pr{\frac1k}.
  \end{equation*}
  By periodicity of the sine we then have
  \begin{equation*}
    \frac1k\sum_{m\in\Z,\atop m\neq\floor{-\beta},\floor{-\beta}+1}\frac{\syf{\frac kh \abs{m+\alpha}}}{|m+\beta|}= \frac1k \sum_{m\in\Z,\atop m\neq\floor{-\beta},\floor{-\beta}+1} \frac{\syf{\frac kh \abs{m+\beta}}}{|m+\beta|} =\frac1k V_2\pr{\Big\{\frac kh\Big\},\beta}+O\pr{\frac 1k}.
  \end{equation*}
  Finally, we observe that 
  \begin{align*}
    \frac{\syf{\tfrac kh\{\alpha\}}}{k\{\alpha\}}+\frac{\syf{\tfrac kh\{-\alpha\}}}{k\{-\alpha\}}&=\frac{1}{2k\{\alpha\}}+\frac{1}{2k\{-\alpha\}}-\frac{\{\tfrac kh\{\alpha\}\}}{k\{\alpha\}}-\frac{\{\tfrac kh\{-\alpha\}\}}{k\{-\alpha\}} + O\pr{\frac1h} \\
    &=\frac{1}{2k\{\alpha\}}+\frac{1}{2k\{-\alpha\}}+O\pr{\frac1h}.
  \end{align*}
Indeed, the third term is $O(1/h)$ (and similarly for the fourth) since, if $0\leq \ell'\leq k$  and $0\leq \ell''\leq h$ are such that $\ell'=\ell\mod k$ and $\ell''\equiv \ell'\mod h$, then $\{\tfrac kh\{\alpha\}\}/{k\{\alpha\}}=\ell''/(\ell'h)\leq \frac1h$. 

  By the above computations we then have
  \begin{align*}
    \frac1{2\pi i}\int_{(5/4)}\frac{F^+(1-s,\alpha)\zeta(s)}{h^{1-s}k^{s}}\,\frac{ds}{1-s}&=\frac 1kV_2(\{\tfrac kh\},\beta)+\frac{1}{2k\{\alpha\}}+\frac{1}{2k\{-\alpha\}}+O\pr{\frac1k+\frac1h}
  \end{align*}
  and so, adding the contribution of the residue~\eqref{eq:rs2} we have
  \begin{equation*}
    I_2=V_2(\{\tfrac kh\},\beta)/k+{\log (k/h)}/{k}+O\pr{\frac1k+\frac1h}.
  \end{equation*}
  Thus,
  \begin{align*}
    I(h, k) =I_1+I_2&=\frac{\log\{\alpha\}+\log\{-\alpha\}}{h}+\frac1hV_1(\tfrac hk,\alpha)+\frac 1kV_2(\{\tfrac kh\},\beta)+\frac{\log (k/h)}{k}+O\pr{\frac1k+\frac1h}.
  \end{align*}
  Comparing this with~\eqref{eq:firstf} we obtain the claimed identity, since for $0<x<1$, $y>0$ we have
  \begin{equation*}
    \log x+\log(1-x)-\log^- (x y)-\log^- ((1-x)y)=-\gamma \log y+O(1),
  \end{equation*}
  where $\gamma=2$ if $0<y\leq 1$ and $0<\gamma\leq 1$ otherwise.
\end{proof}

\subsection{Bound for~$V_1$ in terms of the continued fraction expansion}

We now iterate the relation~\eqref{eq:recipmain} in order to obtain a bound for individual values of~$V_1$.

\begin{corol}\label{cor:mpc}
  Let $\ell,h,k\in\N$ with~$h<k$ and $(h,k)=1$. Then, uniformly for~$\alpha\in\frac1k\Z$, we have
  \begin{equation*}
    \begin{aligned}
      \abs{V_1\pr{\frac { h}k,\alpha}}&\leq\sum_{m=0}^{r-1}\frac{\log ( v_{m+1}/v_{m})}{v_m}+O(1),
      \qquad
      \abs{V_1\pr{\frac {\overline h}k,\alpha}}\leq \frac1k\sum_{m=1}^rv_m\log ( v_m/v_{m-1})+O(1),
    \end{aligned}
  \end{equation*}
  where $v_m$ is the $m$-th partial quotient of $\frac{ h}k$.
\end{corol}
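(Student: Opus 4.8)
The plan is to iterate the reciprocity formula~\eqref{eq:recipmain}. Read as an identity for $V_2$, Proposition~\ref{prop:mp} expresses $V_2(\{k/h\},\beta)$ in terms of $V_1(h/k,\ell/k)$, and vice versa; moreover $V_1(h/k,\ell/k)$ depends on $h$ only through $h\bmod k$. So, starting from $V_1(h/k,\alpha)$ with $h<k$, one application of~\eqref{eq:recipmain} produces $V_2(k_1/h,\cdot)$ with $k_1:=k\bmod h$ (since $\{k/h\}=k_1/h$), and a \emph{second} application --- now with $(h,k)$ replaced by $(h,k_1)$, which is legitimate since~\eqref{eq:recipmain} has no constraint $h<k$ --- turns this into $V_1(k_2/k_1,\cdot)$ with $k_2:=h\bmod k_1$. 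Here one checks that the auxiliary argument $\beta=\{k/h\}^{-1}\{\ell/h\}$ of Proposition~\ref{prop:mp} equals $(\ell\bmod h)/(k\bmod h)$, which is exactly the shape needed to re-apply the proposition, and one uses again the $1$-periodicity of $V_1$ in its first variable to replace $h/k_1$ by $(h\bmod k_1)/k_1$. Thus each pair of applications of~\eqref{eq:recipmain} advances the Euclidean algorithm on $(k,h)$ by one step.

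Writing the Euclidean remainders $w_0=k$, $w_1=h$, $w_{j+1}=w_{j-1}\bmod w_j$ (so $w_r=1$, $w_{r+1}=0$, where $r$ is the length of the continued fraction of $h/k$), I would prove by induction that, uniformly for $\alpha\in\tfrac1k\Z$,
\est{
  \abs{V_1\pr{\tfrac hk,\alpha}}\leq \frac1k\sum_{j=0}^{r-1}w_j\,\log\pr{\tfrac{w_j}{w_{j+1}}}+O(1).
}
Indeed, after $j$ pairs of applications one is looking at $c_j\,V_?(w_{j+1}/w_j,\cdot)$ with $V_?\in\{V_1,V_2\}$ and cumulative coefficient $c_j=w_j/k$; since $w_{j+1}<w_j$, the ``$\gamma\log(k/h)$'' part of~\eqref{eq:recipmain} is $\leq\log(w_j/w_{j+1})$ when $V_?=V_1$ (case $k>h$, where $0\leq\gamma\leq1$), and when $V_?=V_2$ the leading term after solving for $V_2$ is again $\log(w_j/w_{j+1})$ (case $k\leq h$, where both $\tfrac\gamma h\log\tfrac kh$ and $\tfrac hk\log\tfrac kh$ are $O(1)$); in all cases the remaining terms of~\eqref{eq:recipmain} are $O(1)$. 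The recursion stops with an $O(1)$ contribution once a remainder equals $1$ (as $V_1(a/1,\cdot)=0=V_2(0,\cdot)$), and the degenerate cases --- $\ell$ divisible by the current modulus (use the plain Vasyunin branch of~\eqref{eq:recipmain}), $h=1$, or a last partial quotient equal to $1$ --- are all trivially $O(1)$. Finally $\sum_j w_j\ll k$ because the $w_j$ decay geometrically, so the accumulated errors $\sum_j c_j\cdot O(1)$ total $O(1)$.

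It remains to translate the right-hand side above into the stated forms. Let $v_0=1<v_1<\dots<v_r=k$ be the denominators of the convergents of $h/k=[0;a_1,\dots,a_r]$, so $v_m=K(a_1,\dots,a_m)$ and $w_j=K(a_{j+1},\dots,a_r)$ (continuants). Euler's identity $K(a_1,\dots,a_r)=K(a_1,\dots,a_j)K(a_{j+1},\dots,a_r)+K(a_1,\dots,a_{j-1})K(a_{j+2},\dots,a_r)$ gives $w_j/k\leq 1/v_j$, while $\log(w_j/w_{j+1})=\log(v_{j+1}/v_j)+O(1)$ since $w_j/w_{j+1}$ and $v_{j+1}/v_j$ both lie in $[a_{j+1},a_{j+1}+1]$; together with $\sum_j 1/v_j=O(1)$ this yields the first bound. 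For the second bound, apply the displayed estimate with $\overline h/k$ in place of $h/k$: by the reversal symmetry of the continued fraction algorithm (valid since $h\overline h\equiv1\bmod k$, up to a parity-dependent extra partial quotient equal to $1$, which only costs $O(1)$), the Euclidean remainders of $(k,\overline h)$ are exactly $v_r=k,v_{r-1},\dots,v_1,v_0=1$, so $\tfrac1k\sum_j w'_j\log(w'_j/w'_{j+1})=\tfrac1k\sum_{m=1}^r v_m\log(v_m/v_{m-1})$ and $\tfrac1k\sum_j w'_j=O(1)$.

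The main obstacle is the bookkeeping in the iteration: one must check that each pair of applications of Proposition~\ref{prop:mp} genuinely returns an object of the form $V_1(a'/b',\alpha')$ with $(a',b')=1$, $a'<b'$, $\alpha'\in\tfrac1{b'}\Z$ (this is exactly where the identity $\beta=(\ell\bmod h)/(k\bmod h)$ and the $1$-periodicity of $V_1$ enter), and that the coefficients $c_j=w_j/k$ shrink fast enough that the $O(1)$ error from each step, and hence their sum, stays bounded --- equivalently, that the estimate is governed by the first few Euclidean steps.
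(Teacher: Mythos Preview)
Your proposal is correct and follows essentially the same approach as the paper: iterate Proposition~\ref{prop:mp} by alternating the roles of $V_1$ and $V_2$ (exactly the paper's inequalities~\eqref{eq:a1}--\eqref{eq:a2}) along the Euclidean algorithm, then convert between the Euclidean remainders $w_j$ and the convergent denominators $v_m$ via the continuant identity $k=v_s v'_{r-s}+v_{s-1}v'_{r-s-1}$ (which you cite as Euler's identity and the paper cites from Heilbronn). The only cosmetic difference is the order: the paper first derives the bound for $V_1(\overline h/k,\alpha)$ directly in terms of the $v_m$ (by iterating on $h^*\equiv\pm\overline h$) and then deduces the $V_1(h/k,\alpha)$ bound, whereas you derive the bound for $V_1(h/k,\alpha)$ in terms of the $w_j$ first and then invoke the reversal symmetry of continued fractions; these are two presentations of the same argument.
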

\begin{proof}
  By Proposition~\ref{prop:mp}, recalling the notation~$\beta := \{\frac kh\}^{-1}\{\frac\ell h\}$ if~$k\nmid\ell$ and~$\beta := 0$ otherwise, we have
  \begin{align}
    \Big|kV_1\pr{\frac hk,\frac \ell k}\Big|&\leq \delta_{h\neq 1}\Big|h V_2\pr{\Big\{\frac kh\Big\},\beta}\Big|+k\log \pr{\frac{ k}{h}}+O(k)&& \text{if }h\leq k,\label{eq:a1}\\
    \Big|hV_2\pr{\frac kh,\beta}\Big|&\leq \Big|kV_1\pr{\frac hk,\frac \ell k}\Big|+h\log \pr{\frac{ h}{k}}+O(h)& &\text{if }h\geq k,\ h\neq1.\label{eq:a2}  
  \end{align}
  Now, let $h/k=[0;b_1,\dots,b_r]$ be the continued fraction expansion of $h/k$, with~$b_r \neq 1$ if~$r>1$. Also, let $h^*\in[1,k]$ be such that $h^*\equiv \overline {(-1)^{r+1} h}\mod k$ (in particular $h^*/k=[0,b_r,\dots,b_1]$). The Euclid algorithm on $h^*$ and $k$ can be written as (see~\cite{Khintchine1963})
  \begin{align*}
    &v_r=k,\qquad v_{r-1}= h^*,\\
    &v_{\ell+1}=b_{\ell+1}v_{\ell}+v_{\ell-1},\qquad \ell=0,\dots r.
  \end{align*}
  Then, alternating the use of~\eqref{eq:a1}-\eqref{eq:a2} and the reduction modulo the denominator in $V_1$, we obtain
  \begin{equation}\label{eq:preq}
    k\Big|V_1\pr{\frac {\overline h}k,\alpha}\Big|
    \leq \sum_{m=1}^r\big(v_m \log ( v_m/v_{m-1})+O(v_m)\big)= \sum_{m=1}^rv_m \log ( v_m/v_{m-1})+O(k),
  \end{equation}
  as desired, where the last step follows since $v_{n-2}\leq v_{n}/2$ for all $n$. 

  Indicating with $u_m/v_m$ and $u_m'/v_m'$ the $m$-th convergents of $h/k$ and $h^*/k$ respectively (with $v_{-1}=v'_{-1}=0$), one has $k=v_sv'_{r-s}+v_{s-1}v'_{r-s-1}$ for all $0\leq s\leq r$ (see~\cite[p. 91-92]{Heilbronn1969}). In particular, $k/2\leq v_sv'_{r-s}\leq k$. Thus, by~\eqref{eq:preq} we have
  \begin{equation*}
    k\Big|V_1\pr{\frac {h}k,\alpha}\Big|
    \leq\sum_{m=1}^rv'_m \log ( v'_m/v'_{m-1})+O(k) \leq k \sum_{m=0}^{r-1}\frac{\log ( v_{m+1}/v_{m})}{v_m} +O(k).\qedhere
  \end{equation*}
\end{proof}

\section{Proof of Theorem~\ref{th:mt}}

We will deduce Theorem~\ref{th:mt} from Corollary~\ref{cor:mpc} and the following lemma, which shows that $S_f\pr{\frac h k}$ is, up to a small error, a linear combination of $V_1(\frac {\overline h}k,\frac{\ell_j}k)$ for various values of~$\ell_j$.

\begin{lemma}\label{lem:ml}
  Let $h\in\Z, k\in\N$ with $(h,k)=1$. Let $f:\R\to\R$ be a $1$-periodic function which is piecewise~$\cC^1$, and continuous everywhere except possibly at $d$ points $\frac{\ell_1}k,\dots,\frac{\ell_d}k$,  with $0\leq \ell_1<\cdots< \ell_d<k$, and assume $f$ is left and right differentiable at such points.
  Also, assume $2f(\frac{{\ell_i}}{k})=f(\frac{{\ell_i}}{k}+)+f(\frac{{\ell_i}}{k}-),$
  where $f(x\pm):=\lim_{\eps\to0^+}f(x\pm\eps)$.
  Then,
  \begin{align*}
    S_f\pr{\frac h k}
    &= \frac1\pi\sum_{j=1}^d V_1\pr{\frac{\overline h}k,\frac {\ell_i} k}
    \Big(f\Big(\frac{\ell_j}k+\Big)-f\Big(\frac{\ell_j}k-\Big)\Big) 
    +O\Big(\|f'\|_2^{1/2}\Big).
  \end{align*}
\end{lemma}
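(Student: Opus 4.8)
The plan is to split off the contribution of the discontinuities of~$f$ to~$S_f\pr{\frac hk}$ by a one–step decomposition, to identify that contribution with the $V_1$–terms by a discrete Fourier computation, and then to bound the remaining (continuous) piece. \emph{Step~1 (decomposition).} Put $\psi_\ell(x):=\syf{x-\ell/k}$, which is $1$-periodic, $\cC^1$ off~$\ell/k$, has a unit jump at~$\ell/k$, no other jump, and is self–normalised ($2\psi_\ell(\ell/k)=\psi_\ell(\ell/k+)+\psi_\ell(\ell/k-)$). With $c_j:=f(\frac{\ell_j}k+)-f(\frac{\ell_j}k-)$, set $g:=f-\sum_{j=1}^d c_j\psi_{\ell_j}$. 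Then $g$ is $1$-periodic, piecewise~$\cC^1$, \emph{continuous}, and still normalised at each~$\frac{\ell_j}k$. Moreover $\psi_\ell'\equiv-1$ off~$\tfrac1k\Z$, so $g'=f'+\sum_j c_j$ off the breakpoints; combined with $\int_0^1 f'=-\sum_j c_j$ (periodicity) this gives $\|g'\|_2^2=\|f'\|_2^2-(\sum_j c_j)^2\le\|f'\|_2^2$. By linearity $S_f\pr{\frac hk}=S_g\pr{\frac hk}+\sum_j c_j S_{\psi_{\ell_j}}\pr{\frac hk}$, so it remains to prove $S_{\psi_\ell}\pr{\frac hk}=\frac1\pi V_1\pr{\frac{\overline h}k,\frac\ell k}$ exactly and $S_g\pr{\frac hk}=O(\|f'\|_2^{1/2})$.

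\emph{Step~2 (Fourier identity and evaluation of $S_{\psi_\ell}$).} The computational core is the classical identity (DFT of the sawtooth) $\frac1k\sum_{r=1}^{k-1}\syf{r/k}\e{-ar/k}=-\frac i{2k}\cot(\pi a/k)$ for $k\nmid a$, which after the substitution $m\mapsto\overline h m$ yields $\sum_{m=1}^{k-1}\cot(\pi mh/k)\e{\nu m/k}=2ik\,\syf{\nu\overline h/k}$. Expanding a $1$-periodic $\phi$ in its Fourier series (whose value at each~$m/k$ is the normalised value of~$\phi$, by Dirichlet's theorem) and exchanging sums gives
\[
  S_\phi\pr{\frac hk}=2i\sum_{\nu\neq0}\widehat\phi(\nu)\,\syf{\frac{\nu\overline h}k}.
\]
Applying this with $\phi=\psi_\ell$, so $\widehat\psi_\ell(\nu)=\e{-\nu\ell/k}/(2\pi i\nu)$, then grouping~$\nu$ by its residue~$r$ modulo~$k$, using $\sum_{\nu\equiv r\,(k)}\nu^{-1}=\frac\pi k\cot(\pi r/k)$ (principal value) and the parity of $r\mapsto\cot(\pi r/k)\syf{r\overline h/k}$, one recovers precisely $\frac1\pi V_1\pr{\frac{\overline h}k,\frac\ell k}$ in the form~\eqref{eq:def-v1}. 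The conditionally convergent rearrangements here are justified by the smooth–truncation remarks following~\eqref{eq:def-v1}.

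\emph{Step~3 (bounding $S_g$).} Since $\sum_\nu|\widehat g(\nu)|<\infty$ ($g$ being continuous and piecewise~$\cC^1$), the identity applies to $\phi=g$; split the $\nu$-sum at a parameter $M\ge1$. By Cauchy--Schwarz and $|\syf{\cdot}|\le\tfrac12$, the tail contributes $\le\sum_{|\nu|\ge M}|\widehat g(\nu)|\ll M^{-1/2}\|g'\|_2\le M^{-1/2}\|f'\|_2$, while the low part contributes $\le\tfrac12\sum_{0<|\nu|<M}|\widehat g(\nu)|\ll M^{1/2}\|g\|_2$. Since $\|g\|_2\le\|f\|_2+\tfrac1{\sqrt{12}}\sum_j|c_j|$, optimising over~$M$ (balancing at $M\asymp\|f'\|_2/\|g\|_2$, and taking $M=1$ otherwise) yields $S_g\pr{\frac hk}=O\big((\|f'\|_2\|g\|_2)^{1/2}\big)=O(\|f'\|_2^{1/2})$, the implied constant depending on $\|f\|_2$ and on $dD_0$. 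Combining Steps~1--3 proves the lemma.

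\emph{Main obstacle.} Steps~1 and~2 are essentially bookkeeping once the sawtooth--cotangent DFT identity is in hand; the subtle points are the legitimacy of the conditionally convergent manipulations giving the \emph{exact} equality $S_{\psi_\ell}=\frac1\pi V_1$, and---above all---the estimate of Step~3: the trivial bound $\sum_\nu|\widehat g(\nu)|\ll\|g'\|_2$ only gives $O(\|f'\|_2)$, and extracting the square–root saving requires the frequency splitting above together with control of~$\|f\|_2$. This last estimate is the technical heart of the proof.
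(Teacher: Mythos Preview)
Your argument is correct and rests on the same two ingredients as the paper's proof: the discrete Fourier identity $\sum_{m=1}^{k-1}\cot(\pi mh/k)\e{\nu m/k}=2ik\,\syf{\nu\overline h/k}$, and the Fourier expansion of~$f$ with one integration by parts. Your sawtooth decomposition $f=g+\sum_j c_j\psi_{\ell_j}$ is exactly equivalent to the paper's integration by parts in the formula for $\hat f(n)$: the boundary terms there are precisely the Fourier coefficients of the $\psi_{\ell_j}$, and the integral remainder $-\widehat{f'}(n)/(2\pi i n)$ is $\hat g(n)$.

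Two remarks. First, in Step~2 the detour through grouping by residues modulo~$k$ and the principal-value identity $\sum_{\nu\equiv r}\nu^{-1}=\frac\pi k\cot(\pi r/k)$ is unnecessary: simply pairing $\nu$ with $-\nu$ in $\sum_{\nu\neq0}\frac{\e{-\nu\ell/k}}{\nu}\syf{\nu\bar h/k}$ and using the oddness of $\syf{\cdot}$ gives $2\sum_{\nu\geq1}\frac{\cos(2\pi\nu\ell/k)}{\nu}\syf{\nu\bar h/k}$, which is $V_1(\bar h/k,\ell/k)$ by definition.

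Second, and more substantively: your Step~3 is more elaborate than the paper's because you are chasing the stated exponent $\|f'\|_2^{1/2}$. The paper's own proof does \emph{not} obtain this---it bounds the remainder term by $\sum_{n\neq0}|\widehat{f'}(n)|/|n|\ll\|f'\|_2$ via Cauchy--Schwarz and Bessel, giving $O(\|f'\|_2)$ with an absolute constant. This is what is actually used downstream (Theorem~\ref{th:mt} has error $O(D_1)=O(\|f'\|_2)$), so the square-root in the lemma is evidently a typo. Your frequency-splitting argument is valid and does yield $O((\|f'\|_2\|g\|_2)^{1/2})$, but the price is an implied constant depending on $\|f\|_2$ and $dD_0$, which is a different (and for the application, less clean) statement than the $O(\|f'\|_2)$ the paper proves. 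If you drop Step~3 in favour of the one-line Bessel bound $\sum_\nu|\hat g(\nu)|\ll\|g'\|_2\leq\|f'\|_2$, you recover exactly the paper's argument.
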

\begin{proof}
  We have the Fourier expansion
  \begin{equation*}
    f(x)=\sum_{n\in\Z} \hat f(n)\e{-nx}, \qquad x\in\R
  \end{equation*}
  where, for $n\neq0$,
  \begin{align*}
    \hat f(n) &= \sum_{j=1}^d\frac{f(\tfrac{\ell_j}k-)-f(\tfrac{\ell_j}k+)}{2\pi i n}\e{n{\ell_{j}}/k}  - \frac{\hat{f'}(n)}{2\pi i n},
  \end{align*}
  and~$\widehat{f'}(n) := \int_0^1 f'(y)\e{ny}\df y$. Note that, by the Bessel inequality,
  \begin{equation}
    \sum_{n\neq 0} \abs{\frac{\widehat{f'}(n)}{n}} \ll \|f'\|_2.\label{eq:bound-bessel}
  \end{equation}
  Now, we have
  \begin{equation*}
    \sum_{m=1}^{k-1} \cot\pr{\pi \frac{m h}k}\e{-\frac{nm}k}=-2ik\syf{\frac{n\overline h}k}
  \end{equation*}
  and thus,
  \begin{align*}
    \sum_{m=1}^{k-1}f\pr{\frac{m}{k}}&\cot\pr{\frac {\pi m h}k} \\
    & = -2i k \sum_{n\in\Z_{\neq0}} \syf{\frac{n\overline h}k} \bigg(
    \sum_{j=1}^d\frac{f(\tfrac{\ell_j}k-)-f(\tfrac{\ell_j}k+)}{2\pi i n}\e{n{\ell_{j}}/k} - \frac{\widehat{f'}(n)}{2\pi i n}\bigg).
  \end{align*}
  Grouping this with the definition~\eqref{eq:def-v1} and the bound~\eqref{eq:bound-bessel} yields our claim.
\end{proof}
\begin{proof}[Proof of Theorem~\ref{th:mt}]
  At the cost of committing an error of size $O(d D_0)$ in $S_f\pr{\frac h k}$, we modify $f$ so that all of its $d$ points of discontinuity are at rationals of the form $\frac{\ell}{k}$ with $2f\pr{\frac{{\ell}}{k}}=f\pr{\frac{{\ell}}{k}+}+f\pr{\frac{{\ell}}{k}-}$. Theorem~\ref{th:mt} then follows by Proposition~\ref{th:mt} and Lemma~\ref{lem:ml}.
\end{proof}

\subsection*{Particular case: partial cotangent sums}

For the partial cotangent sums, alluded to in the introduction (Figure~\ref{fig:1}), since there is only one point of discontinuity other than $0$, we indeed obtain a (non-exact) reciprocity relation in the following form.

\begin{corol}\label{cor:mcc}
  Let $h\in\Z, k\in\N$ with $(h,k)=1$, $1\leq h< k$, and let $0< \ell <k$. Let
  $k_1\equiv k\mod h$, $\ell'\equiv \ell \mod h$ with $1\leq k_1,\ell'\leq h$ and let $\ell_1\equiv \ell'\mod{k_1}$, $h_1\equiv h\mod {k_1}$ with $0\leq h_1< k_1$, $1\leq \ell_1\leq k_1$. Then,
  \begin{equation*}
    C_{\ell}(\overline{h}/k)-C_{\ell_1}(\overline{h_1}/{k_1})=\frac1\pi\Big({\gamma_{h,k,\ell}}-1\Big)\log \frac{ k}{h}+O\pr{\frac{h}{{k_1}}}.
  \end{equation*}
  with $0\leq \gamma_{h,k,\ell}\leq1$.
\end{corol}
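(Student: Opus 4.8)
The plan is to realize $C_\ell$ as a special case of $S_f$ and to iterate the reciprocity formula twice. Let $f$ be the $1$-periodic function equal on $[0,1)$ to the indicator of $(0,\ell/k]$, renormalized so that $f(\ell/k)=\tfrac12$. Then $f$ is piecewise $\cC^1$ (in fact piecewise constant, so $\|f'\|_2=0$) with a single discontinuity in $(0,1)$, at $\ell/k$, of jump $f(\tfrac{\ell}{k}+)-f(\tfrac{\ell}{k}-)=-1$, together with the jump $+1$ at $0$. Modifying the value of $f$ at $\ell/k$ changes $C_\ell(\overline h/k)$ by $\tfrac1{2k}\cot(\pi\ell h/k)=O(1)$, so Lemma~\ref{lem:ml} applied with $h$ replaced by $\overline h$ (using $\overline{\overline h}\equiv h\bmod k$) gives
$$ \pi C_\ell(\overline h/k)=V_1(h/k,0)-V_1(h/k,\ell/k)+O(1), $$
and the same identity with $(h,k,\ell)$ replaced by $(h_1,k_1,\ell_1)$ gives $\pi C_{\ell_1}(\overline{h_1}/k_1)=V_1(h_1/k_1,0)-V_1(h_1/k_1,\ell_1/k_1)+O(1)$. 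Thus the corollary reduces, modulo $O(h/k_1)$, to an identity between these two differences of $V_1$-values.

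First I would apply Proposition~\ref{prop:mp} to each of $V_1(h/k,0)$ and $V_1(h/k,\ell/k)$. As $h<k$, the instance with second argument $0$ has $\gamma=1$ and $\beta=0$, whereas the one with second argument $\ell/k$ has $\gamma=\gamma_{h,k,\ell}\in[0,1]$ and $\beta=\beta_1:=\{k/h\}^{-1}\{\ell/h\}$. Subtracting the two relations, the logarithmic terms combine into $(1-\gamma_{h,k,\ell})\log(k/h)$, and one is left with a contribution built from $\tfrac hk\bigl(V_2(\{k/h\},0)-V_2(\{k/h\},\beta_1)\bigr)$. This is precisely where the \emph{alternating} behaviour enters: a single step of reciprocity converts a $V_1$ into a $V_2$ rather than back into a $V_1$. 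Now $\{k/h\}=k_1/h$, and unwinding the definition of $\beta$ shows $\beta_1=\ell'/k_1$, a fraction of denominator $k_1$ with $\ell'\equiv\ell\bmod h$; its class mod $1$ is represented by $\ell_1/k_1$.

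The second step reduces these $V_2$'s. Since their first argument $k_1/h$ satisfies $k_1<h$, and since $V_2(\cdot,0)=V_1(\cdot,0)$ (immediate from the definitions), one feeds $V_2(k_1/h,0)=V_1(k_1/h,0)$ and $V_2(k_1/h,\beta_1)$ back into Proposition~\ref{prop:mp}, now applied to the pair $(h,k_1)$. After reducing $h$ modulo $k_1$ to $h_1$ and $\ell'$ modulo $k_1$ to $\ell_1$ inside the $V_1$-values, this expresses the $V_2$-combination in terms of $V_1(h_1/k_1,0)-V_1(h_1/k_1,\ell_1/k_1)$, that is, in terms of $\pi C_{\ell_1}(\overline{h_1}/k_1)$, up to a secondary logarithmic term which is a bounded rational multiple of $\log(h/k_1)$ and up to error terms. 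Collecting everything, the only surviving logarithm is $\tfrac1\pi(\gamma_{h,k,\ell}-1)\log(k/h)$; the secondary logarithmic term is $O(\tfrac hk\log(h/k_1))=O(h/k_1)$ by $\log x\le x$; and the errors coming from the two applications of Proposition~\ref{prop:mp} are $O(1)$, hence also $O(h/k_1)$.

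The hard part is the bookkeeping in this double iteration. One must (i) check that the shift parameters propagate correctly along the Euclidean algorithm, the second argument passing from $\ell/k$ to $\beta_1=\ell'/k_1$ and then, after reduction mod $k_1$, to $\ell_1/k_1$, matching the data $k_1,\ell',\ell_1,h_1$ of the statement; (ii) verify that, after the second reciprocity step, the coefficient of $C_{\ell_1}(\overline{h_1}/k_1)$ and the dependence on the various $\gamma$-values telescope exactly to the stated shape; and (iii) confirm that every secondary main term and every error term is genuinely $O(h/k_1)$ rather than larger. The degenerate cases $h=1$ (the $V_2$-contribution is absent and the claim follows from the first step alone) and $k_1=1$ (where $C_{\ell_1}(\overline{h_1}/k_1)$ is understood as $0$) are handled separately.
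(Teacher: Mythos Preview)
Your approach is essentially the same as the paper's: express $C_\ell(\overline h/k)$ via Lemma~\ref{lem:ml} as a difference of two values of $V_1$, then apply Proposition~\ref{prop:mp} once with the pair $(h,k)$ and once with the pair $(h,k_1)$, using that the two applications produce the \emph{same} $V_2$-value $V_2(k_1/h,\ell'/k_1)$ (respectively $V_2(k_1/h,0)$), so that these cancel upon combining. The paper phrases the second step as a direct subtraction of the two instances of Proposition~\ref{prop:mp} rather than as ``feeding $V_2$ back'', but the algebra is identical; your bookkeeping of the shift parameters $\beta_1=\ell'/k_1$ and the reductions $h\mapsto h_1$, $\ell'\mapsto\ell_1$ matches the paper exactly.
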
 
\begin{proof}
  By Lemma~\ref{lem:ml} we have $C_\ell(\overline h/k)=\frac1\pi (V_1(\frac{ h}{k},\frac{\ell }k)- V(\frac{ h}{k},0))+O(1)$. The result then follows, since applying Proposition~\ref{prop:mp} to $(\tfrac{h}k,\tfrac\ell k)$ and to $(\tfrac{h}{k_1},\tfrac{\ell'} {k_1})\equiv(\tfrac{h_1}{k_1},\tfrac{\ell_1} {k_1})\mod 1$ we obtain
  \begin{equation*}
    V_1\pr{\frac{h}k,\frac\ell k}-V_1\pr{\frac{h}{k_1},\frac{\ell'} {k_1}}
    =\Big({\gamma_{h,k,\ell}}-\frac{h}{k}\Big)\log {\frac{ k}{h}}+\frac{h}{k_1}\log {\frac{ k_1}{h}}+O\pr{\frac{h}{{k_1}}}.\qedhere
  \end{equation*}
\end{proof}

\bibliographystyle{amsalpha2}
\bibliography{../bib}

\providecommand{\bysame}{\leavevmode\hbox to3em{\hrulefill}\thinspace}
\providecommand{\MR}{\relax\ifhmode\unskip\space\fi MR }
\providecommand{\MRhref}[2]{%
  \href{http://www.ams.org/mathscinet-getitem?mr=#1}{#2}
}
\providecommand{\href}[2]{#2}
\begin{thebibliography}{BC13b}

\bibitem[Apo76]{Apostol1976}
T.~M. Apostol, \emph{Introduction to analytic number theory}, Springer-Verlag,
  New York-Heidelberg, 1976, Undergraduate Texts in Mathematics. \MR{0434929}

\bibitem[BD03]{Baez-Duarte2003}
L.~B\'{a}ez-Duarte, \emph{A strengthening of the {N}yman-{B}eurling criterion
  for the {R}iemann hypothesis}, Atti Accad. Naz. Lincei Rend. Lincei Mat.
  Appl. \textbf{14} (2003), no.~1, 5--11.

\bibitem[Bag06]{Bagchi2006}
B.~Bagchi, \emph{On {N}yman, {B}eurling and {B}aez-{D}uarte's {H}ilbert space
  reformulation of the {R}iemann hypothesis}, Proc. Indian Acad. Sci. Math.
  Sci. \textbf{116} (2006), no.~2, 137--146.

\bibitem[Bec03]{Beck2003}
M.~Beck, \emph{Dedekind cotangent sums}, Acta Arith. \textbf{109} (2003),
  no.~2, 109--130.

\bibitem[Ber76]{Berndt1976}
B.~C. Berndt, \emph{Dedekind sums and a paper of {G}. {H}. {H}ardy}, J. Lond.
  Math. Soc. (2) \textbf{13} (1976), no.~1, 129--137.

\bibitem[Bet15]{Bettin2015}
S.~Bettin, \emph{On the distribution of a cotangent sum}, Int. Math. Res. Not.
  IMRN (2015), no.~21, 11419--11432.

\bibitem[BC13a]{Bettin2013a}
S.~Bettin and B.~Conrey, \emph{Period functions and cotangent sums}, Algebra
  Number Theory \textbf{7} (2013), no.~1, 215--242.

\bibitem[BC13b]{BettinConrey2013}
S.~Bettin and J.~B. Conrey, \emph{A reciprocity formula for a cotangent sum},
  Int. Math. Res. Not. IMRN (2013), no.~24, 5709--5726.

\bibitem[BD]{BettinDrappeaua}
S.~Bettin and S.~Drappeau, \emph{Modularity and value distribution of quantum
  invariants of hyperbolic knots}, Preprint.

\bibitem[GR07]{GZ}
I.~S. Gradshteyn and I.~M. Ryzhik, \emph{Table of integrals, series, and
  products}, seventh ed., Elsevier/Academic Press, Amsterdam, 2007, Translated
  from the Russian.

\bibitem[Hei69]{Heilbronn1969}
H.~Heilbronn, \emph{On the average length of a class of finite continued
  fractions}, Number {Theory} and {Analysis} ({Papers} in {Honor} of {Edmund}
  {Landau}), Plenum, New York, 1969, pp.~87--96.

\bibitem[Hic77]{Hickerson1977}
D.~Hickerson, \emph{Continued fractions and density results for {D}edekind
  sums}, J. Reine Angew. Math. \textbf{290} (1977), 113--116.

\bibitem[Khi63]{Khintchine1963}
A.~Ya. Khintchine, \emph{Continued fractions}, Translated by Peter Wynn, P.
  Noordhoff, Ltd., Groningen, 1963. \MR{0161834}

\bibitem[MR16]{Maier2016}
H.~Maier and M.~Th. Rassias, \emph{Generalizations of a cotangent sum
  associated to the {E}stermann zeta function}, Commun. Contemp. Math.
  \textbf{18} (2016), no.~1, 1550078, 89.

\bibitem[Tit39]{Titchmarsh1939}
E.~C. Titchmarsh, \emph{The theory of functions}, second ed., Oxford University
  Press, Oxford, 1939. \MR{3728294}

\bibitem[Vas95]{Vasyunin1995}
V.~I. Vasyunin, \emph{On a biorthogonal system associated with the {R}iemann
  hypothesis}, Algebra i Analiz \textbf{7} (1995), no.~3, 118--135.

\bibitem[Zag73]{Zagier1973}
D.~Zagier, \emph{Higher dimensional {D}edekind sums}, Math. Ann. \textbf{202}
  (1973), 149--172.

\end{thebibliography}

\end{document}